\DeclareMathOperator{\sn}{sn}
\newtheorem{thm}{Theorem}
\newtheorem*{stm}{Statement}
\newtheorem*{xpr}{Experimental result}
\theoremstyle{remark}
\newtheorem*{rmk}{Remark}
\author{Igor G. Korepanov}
\title{Special 2-cocycles and 3--3 Pachner move relations in Grassmann algebra}
\date{\normalsize January 2013;\quad Appendix --- July 2013}
\begin{document}

\maketitle

\begin{abstract}
Grassmann-algebraic relations, corresponding naturally to Pachner move 3--3 in four-dimensional topology, are presented. They involve 2-cocycles of two specific forms, and some more homological objects.
\end{abstract}

\section{Introduction}\label{s:i}

This short note presents a construction of a 3--3 Pachner move relation in Grassmann algebra --- the four-dimensional analogue of pentagon relation known in three-dimensional algebraic topology. Below:
\begin{itemize}\itemsep 0pt
\item in Section~\ref{s:GB}, the Grassmann--Berezin calculus of anticommuting variables is briefly recalled,
\item in Section~\ref{s:33}, Pachner move 3--3 is explained, together with a possible form of algebraic relation corresponding to it,
\item in Section~\ref{s:W}, actual four-simplex Grassmann weights are presented satisfying this relation, and a cocyclic property of their coefficients is stated,
\item in Section~\ref{s:d}, a generalization of our Grassmann weights is proposed involving new coefficients, also, apparently, of homological nature,
\item and in Appendix on page~\pageref{a:rank4}, different Grassmann weights are presented, whose parameterization uses Jacobi elliptic functions. According to numerical evidence, these Grassmann weights also satisfy the same 3--3 relation.
\end{itemize}

\section{Grassmann--Berezin calculus}\label{s:GB}

A \emph{Grassmann algebra} over a field~$\mathbb F$ of characteristic $>2$ is an associative $\mathbb F$-algebra with unity, generators~$x_i$ and relations
\begin{equation*}
x_i x_j = -x_j x_i .
\end{equation*}
In particular, $x_i^2 =0$, so an element of a Grassmann algebra is a polynomial of degree $\le 1$ in each~$x_i$. If it consists only of monomials of even (odd) total degree, it is called and even (odd) element.

The \emph{exponent} is defined by its Taylor series. For instance,
\[
\exp (x_1x_2+x_3x_4) = 1+x_1x_2+x_3x_4+x_1x_2x_3x_4 .
\]

The \emph{Berezin integral}~\cite{B} in a variable (${}={}$generator)~$x_i$ is, by definition, the $\mathbb F$-linear operator
\[
f \mapsto \int f \, \mathrm dx_i
\]
in Grassmann algebra satisfying
\begin{equation*}
\int \mathrm dx_i =0, \quad \int x_i\, \mathrm dx_i =1, \quad \int gh\, \mathrm dx_i = g \int h\, \mathrm dx_i,
\end{equation*}
if $g$ does not contain~$x_i$; multiple integral is understood as iterated one, according to the following model:
\begin{equation*}
\iint xy\, \mathrm dy\, \mathrm dx = \int x \left( \int y\, \mathrm dy \right) \mathrm dx = 1 .
\end{equation*}

\section{Pachner move 3--3 and a proposed form of algebraic relation}\label{s:33}

Pachner moves~\cite{Pachner} are elementary local rebuildings of a manifold triangulation. A triangulation of a piecewise-linear manifold can be transformed into another triangulation using a finite sequence of Pachner moves, see~\cite{Lickorish} for a pedagogical introduction.

There are five (types of) Pachner moves in four dimensions, of which move 3--3 is, in some informal sense, central. It transforms a cluster of three four-simplices situated around a two-face into a cluster of three other four-simplices situated around another two-face, and occupying the same place in the manifold. We say that these clusters form the left- and right-hand sides of Pachner move, respectively. There are six vertices in each cluster, we denote them $1\dots 6$, and the four-simplices will be 12345, 12346 and~12356 in the l.h.s., and 12456, 13456 and~23456 in the r.h.s. Thus, the common inner two-face is 123 in the l.h.s., and 456 in the r.h.s.

An algebraic relation whose l.h.s.\ and r.h.s.\ can be said to correspond naturally to the l.h.s.\ and r.h.s.\ of a Pachner move gives hope of constructing an invariant of piecewise-linear manifolds.

\begin{rmk}
Four other Pachner moves in four-dimensional topology are $2\leftrightarrow 4$ and $1\leftrightarrow 5$. Experience shows that if an interesting formula related to move 3--3 has been discovered, then there are also formulas corresponding to other moves.
\end{rmk}

The Grassmann-algebraic Pachner move relations proposed in this paper have the following form:
\begin{multline}\label{33}
f_{123} \int \mathcal W_{12345} \mathcal W_{12346} \mathcal W_{12356} \,\mathrm dx_{1234} \,\mathrm dx_{1235} \,\mathrm dx_{1236} \\
 = \pm f_{456} \iiint \mathcal W_{12456} \mathcal W_{13456} \mathcal W_{23456} \,\mathrm dx_{1456} \,\mathrm dx_{2456} \,\mathrm dx_{3456}.
\end{multline}
Here Grassmann variables~$x_{ijkl}$ are attached to all three-faces~$ijkl$; the \emph{Grassmann weight}~$\mathcal W_{ijklm}$ of a four-simplex~$ijklm$ depends on (i.e., contains) the variables on its three-faces, e.g., $\mathcal W_{12345}$ depends on $x_{1234}$, $x_{1235}$, $x_{1245}$, $x_{1345}$ and~$x_{2345}$. The integration goes in variables on \emph{inner} three-faces in the corresponding side of Pachner move, while the result depends on the variables on boundary faces. Also, there are numeric factors~$f_{ijk}$ before the integrals, thought of as attached to the respective inner two-faces $ijk=123$ or~$456$.

\section{Grassmann weights satisfying the 3--3 relation, and a cocyclic property of coefficients}\label{s:W}

We now present Grassmann four-simplex weights~$\mathcal W_{ijklm}$ and factors~$f_{ijk}$, satisfying the 3--3 algebraic relation~\eqref{33}. These will depend on the \emph{coordinates} of vertices: we attach to each vertex~$i$ two numbers $\xi_i,\eta_i\in \mathbb F$ that must be generic enough so that the expressions~\eqref{phi} below never vanish.

\begin{rmk}
Or we can take \emph{indeterminates} over~$\mathbb F$ --- algebraically independent elements --- for $\xi_i$ and~$\eta_i$.
\end{rmk}

We define~$\mathcal W_{ijklm}$ as the following Grassmann--Gaussian exponent:
\begin{equation}\label{exp}
\mathcal W_{ijklm} = \exp \Phi_{ijklm},
\end{equation}
where
\begin{equation}\label{Phi}
\Phi_{ijklm} = p_{ijklm} \sum_{\substack{\text{over 2-faces }abc\\[.3ex] \text{ of }ijklm}} \epsilon_{d_1abcd_2}^{ijklm} \, \varphi_{abc} \, x_{\{abcd_1\}} x_{\{abcd_2\}},
\end{equation}
and below we explain the notations in~\eqref{Phi}.

First, both $p_{ijklm}$ and~$\epsilon_{d_1abcd_2}^{ijklm}$ are signs. The first of them reflects the consistent orientation of four-simplices, namely, for the left-hand side
\[
p_{12345}=1,\quad p_{12346}=-1,\quad p_{23456}=1,
\]
and for the right-hand side
\[
p_{12456}=1,\quad p_{13456}=-1,\quad p_{23456}=1.
\]
As for the epsilon, it is the sign of permutation between the sequences of its subscripts and superscripts.

Second, the value~$\varphi_{abc}$ is defined as follows:
\begin{equation}\label{phi}
\varphi_{abc}=\left| \begin{matrix} 1 & 1 & 1 \\ \xi_a & \xi_b & \xi_c \\ \eta_a & \eta_b & \eta_c \end{matrix} \right|.
\end{equation}
Thus, $\varphi_{abc}$ belongs to an \emph{oriented} two-face: for instance, $\varphi_{abc}=-\varphi_{bac}$.

And third, the curly brackets in~\eqref{Phi} serve to emphasize that the Grassmann variable~$x_{\{abcd\}}$ does \emph{not} depend on the order of indices $a,b,c,d$.

\begin{thm}\label{th:f}
The weights~$\mathcal W_{ijklm}$ defined in this Section satisfy the relation~\eqref{33}, with
\[
f_{ijk}=\frac{1}{\varphi_{ijk}},
\]
and the sign before the right-hand side is minus.
\end{thm}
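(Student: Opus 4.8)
The plan is to compute both Berezin integrals explicitly and to match the results as odd elements of the Grassmann algebra in the nine boundary variables. First I would record the bookkeeping: the six three-faces that get integrated split as the inner faces $1234,1235,1236$ on the left and $1456,2456,3456$ on the right, while the remaining nine three-faces (the boundary of the common four-dimensional region) carry the variables on which the two sides must agree. Since each $\Phi_{ijklm}$ is an even element, the three weights on either side commute, so $\mathcal W_{12345}\mathcal W_{12346}\mathcal W_{12356}=\exp(\Phi_{12345}+\Phi_{12346}+\Phi_{12356})$, and similarly on the right; call these $\exp\Phi_L$ and $\exp\Phi_R$.

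Next I would use that the Berezin integral over the inner variables is extraction of the coefficient of their product (up to an ordering sign), and analyze which monomials of $\exp\Phi_L$ are of top degree in $x_{1234},x_{1235},x_{1236}$. The structural point is that the only coupling of two inner variables to one another comes from the central two-face $123$, producing the three terms $x_{1234}x_{1235}$, $x_{1234}x_{1236}$, $x_{1235}x_{1236}$, each carrying a factor $\varphi_{123}$; every other occurrence of an inner variable couples it linearly to a boundary variable. Because any two of the central terms share a variable and hence multiply to zero, each surviving contribution uses at most one central term. Consequently the integrated left-hand side splits into a part linear in the boundary variables (one central factor $\varphi_{123}$ times one inner--boundary coupling, covering the third inner variable) and a part cubic in the boundary variables (three inner--boundary couplings, with no central factor). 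I would evaluate each part while tracking the signs $p_{ijklm}$, the permutation signs $\epsilon^{ijklm}_{d_1abcd_2}$, the reordering signs of the anticommuting variables, and the sign from the chosen integration order, and then repeat the computation verbatim for $\exp\Phi_R$ through the $456$-channel.

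The matching then proceeds degree by degree. In the linear part the prefactor $f_{123}=1/\varphi_{123}$ cancels the ubiquitous central determinant, so each boundary variable is multiplied by a short alternating sum of single determinants $\varphi_{abc}$; for instance the coefficient of $x_{1245}$ becomes $\pm\varphi_{124}\pm\varphi_{125}$ on the left and $\pm\varphi_{145}\pm\varphi_{245}$ on the right, and these coincide precisely by the cocyclic identity $\varphi_{245}-\varphi_{145}+\varphi_{125}-\varphi_{124}=0$, valid for any four vertices because $\varphi_{abc}$ is a $3\times3$ determinant in the vertex coordinates. This is exactly the announced cocyclic property of the coefficients, and comparing one such term pins down the global constant, forcing the overall sign to be minus.

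The main obstacle is the cubic part. Here the left-hand contribution still carries a factor $1/\varphi_{123}$ and the right-hand one a factor $1/\varphi_{456}$, so the two are related not by cancellation but by a genuine polynomial identity of the form $\varphi_{456}\sum_L\varphi\varphi\varphi=\pm\,\varphi_{123}\sum_R\varphi\varphi\varphi$ among products of three vertex determinants; a typical monomial such as $x_{1245}x_{1346}x_{2356}$ already receives two matched triple products on each side. Reducing this to the basic relations --- the cocycle identity together with the quadratic Grassmann--Pl\"ucker relations satisfied by the $2\times2$ minors of the coordinate matrix $(\xi,\eta)$ --- while keeping the antisymmetrization signs consistent, is the delicate step. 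I would organize the boundary monomials by which vertices among $\{4,5,6\}$ and $\{1,2,3\}$ they involve so that each class collapses to a single determinant identity, and finally check that the resulting global sign agrees with the minus already read off from the linear part.
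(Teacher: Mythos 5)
Your bookkeeping for the linear sector is sound: the only inner--inner coupling on each side does come from the central two-face ($123$ or $456$), two central terms annihilate each other, the prefactor $f_{123}=1/\varphi_{123}$ cancels the central determinant, and the coefficient of each boundary variable reduces to the cocycle identity for a tetrahedron (your $x_{1245}$ example is the right computation). Note, however, that the paper itself offers no human argument at all --- its proof is the single sentence ``Direct calculation'' carried out with the PL package --- so you are not reproducing or deviating from a published route; you are proposing a new one, and it is not yet a proof.

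The genuine gap is in the cubic sector, and it is twofold. First, your enumeration of the top-degree monomials is incomplete. You correctly observe that a surviving contribution uses \emph{at most} one central term, but then describe the cubic part as consisting only of products of three inner--boundary couplings ``with no central factor.'' In fact each $\Phi_{ijklm}$ also contains three boundary--boundary couplings (e.g.\ $\varphi_{145}\,x_{1245}x_{1345}$ inside $\Phi_{12345}$), so there are additional cubic contributions of the shape (central term) $\times$ (inner--boundary term) $\times$ (boundary--boundary term) --- for instance $\varphi_{123}x_{1234}x_{1235}\cdot\varphi_{126}x_{1236}x_{1246}\cdot\varphi_{156}x_{1256}x_{1356}$. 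These carry a factor $\varphi_{123}$ that cancels against $f_{123}$, so after integration the cubic part is a sum of triple products divided by $\varphi_{123}$ \emph{plus} a sum of double products with no such denominator; the identity to be proved is therefore not of the homogeneous form $\varphi_{456}\sum_L\varphi\varphi\varphi=\pm\varphi_{123}\sum_R\varphi\varphi\varphi$ that you state. Second, even in your intended form, the cubic identity is only asserted to follow from the cocycle relation together with Pl\"ucker relations among the $2\times 2$ minors of $(\xi,\eta)$; no reduction is actually exhibited, and this is precisely the part of the theorem that carries its content (it is why the author resorts to a machine check). Until the full list of cubic monomials is written down, the signs $p$, $\epsilon$, and the reordering and integration-order signs are tracked, and the resulting polynomial identity in the $\varphi$'s is verified, the argument establishes only the linear part of the relation.
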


\begin{proof}
Direct calculation. I used our package~PL~\cite{PL} for manipulations in Grassmann algebra.
\end{proof}

\begin{stm}
Values~$\varphi_{abc}$ form a \emph{2-cocycle}: for a tetrahedron~$abcd$,
\[
\varphi_{bcd}-\varphi_{acd}+\varphi_{abd}-\varphi_{abc}=0.
\]
\end{stm}

\begin{proof}
Simple calculation using~\eqref{phi}.
\end{proof}

\section{A generalization involving still more homological objects}\label{s:d}

We are now going to generalize our Grassmann four-simplex weights using still more objects of, apparently, homological nature.

Calculations show that the exponent~\eqref{exp} has actually no terms of degree${}>2$, that is,
\begin{equation}\label{1+Phi}
\exp \Phi_{ijklm} = 1 + \Phi_{ijklm} .
\end{equation}
We now change the definition~\eqref{exp} to the following:
\begin{equation}\label{h}
\mathcal W_{ijklm} = h_{ijklm} + \Phi_{ijklm},
\end{equation}
where $h_{ijklm}$ is some numeric coefficient (or, more generally, an even element of Grassmann algebra).

\begin{thm}
The relation~\eqref{33} holds also for weights defined according to~\eqref{h}, provided the coefficients~$h_{ijklm}$ in its r.h.s.\ are expressed through those in its l.h.s. as follows:
\begin{eqnarray*}
\varphi_{456}\,h_{12456} &=& \varphi_{345}\,h_{12345} - \varphi_{346}\,h_{12346} + \varphi_{356}\,h_{12356}\,,\\
\varphi_{456}\,h_{13456} &=& \varphi_{245}\,h_{12345} - \varphi_{246}\,h_{12346} + \varphi_{256}\,h_{12356}\,,\\
\varphi_{456}\,h_{23456} &=& \varphi_{145}\,h_{12345} - \varphi_{146}\,h_{12346} + \varphi_{156}\,h_{12356}\,.
\end{eqnarray*}
\end{thm}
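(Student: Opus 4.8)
The plan is to exploit the bilinearity of each weight \eqref{h} in its constant part $h_{ijklm}$ and its quadratic part $\Phi_{ijklm}$, and then to separate the resulting terms by their degree in the \emph{boundary} variables. Expanding each of the two triple products in \eqref{33}, a term in which $k$ of the three factors contribute their $\Phi$-part has $x$-degree $2k$; after the three Berezin integrations --- which delete the three inner three-faces and demand $x$-degree $1$ in each of them --- the terms with $k=0$ and $k=1$ vanish identically, since a single $\Phi_{ijklm}$ misses at least one of the three inner variables. Hence only $k=2$ and $k=3$ survive, leaving boundary-degree $1$ and $3$ respectively. As these two boundary-degrees cannot mix, the relation \eqref{33} for the weights \eqref{h} splits into two independent identities: a boundary-degree-$3$ part, call it (A), that is free of the $h$'s, and a boundary-degree-$1$ part, call it (B), that is linear in them.

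First I would dispose of (A). It is exactly the $\Phi^3$-relation obtained from Theorem~\ref{th:f} by setting all $h\equiv 1$ there --- a legitimate special case, because by the cocycle property above the $h$-relations of the statement send $h\equiv 1$ on the left to $h\equiv 1$ on the right --- and then extracting the boundary-degree-$3$ part of both sides. Thus (A) needs no fresh computation.

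It remains to prove (B). Writing the left-hand side of \eqref{33} as $c_L+\sum_i h_i A_i$ and, after substituting the stated linear expressions for the three right-hand coefficients, the right-hand side as $c_R+\sum_i h_i\bigl(\sum_j M_{ji}B_j\bigr)$ --- where $A_i$ and $B_j$ are the surviving paired integrals $f_{123}\int\Phi\Phi\,\mathrm dx$ and $f_{456}\int\Phi\Phi\,\mathrm dx$, and $M_{ji}$ are the $\varphi$-ratios read off from the three displayed equations --- I would match the coefficient of each free parameter $h_{12345},h_{12346},h_{12356}$. This yields three vector identities of the form $A_i=-\sum_j M_{ji}B_j$. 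Because the whole configuration, together with the $h$-relations, is covariant under the $S_3$ that relabels the vertices $\{4,5,6\}$ --- this relabeling permutes the three left four-simplices, hence the three identities of (B), while fixing each right four-simplex and leaving each displayed $h$-relation invariant, the sign changes of the determinants $\varphi$ cancelling --- the three identities are permuted transitively, and it suffices to verify a single one, say the coefficient of $h_{12345}$.

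For that one identity I would evaluate the relevant integrals explicitly, by the elementary case analysis of which factor supplies each inner variable $x_{1234},x_{1235},x_{1236}$ on the left (respectively $x_{1456},x_{2456},x_{3456}$ on the right). Each such $\int\Phi\Phi\,\mathrm dx$ comes out as a linear form in the boundary variables whose every coefficient is $\pm$ a product of two factors $\varphi_{abc}$, and the prefactors $f_{123}=1/\varphi_{123}$ and $f_{456}=1/\varphi_{456}$ cancel the factor attached to the inner $2$-face. Matching the two sides coefficient by coefficient then reduces, for each boundary variable, to one instance of the $2$-cocycle relation $\varphi_{bcd}-\varphi_{acd}+\varphi_{abd}-\varphi_{abc}=0$ established above on a suitable tetrahedron. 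The hard part, I expect, will be precisely this final bookkeeping: keeping the orientation signs $p_{ijklm}$ and $\epsilon^{ijklm}_{d_1abcd_2}$ of \eqref{Phi} consistent across the two sides and arranging the several surviving terms so that the $\varphi$-coefficients visibly collapse through the cocycle --- the step most naturally delegated to symbolic computation, as in the proof of Theorem~\ref{th:f}.
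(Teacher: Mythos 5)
The paper's own proof of this theorem is literally the phrase ``direct calculation,'' so there is nothing to compare against except the internal soundness of your argument. Your architecture is good and is genuine added value: the expansion over the number $k$ of $\Phi$-factors, the vanishing of $k=0,1$, the separation by boundary degree into an $h$-free degree-$3$ identity inherited from Theorem~\ref{th:f} (and your observation that the displayed $h$-relations send $h\equiv1$ to $h\equiv1$, via the cocycle relation on the tetrahedra $3456$, $2456$, $1456$, is exactly right and needed there), and the reduction to matching coefficients of the free parameters $h_{12345},h_{12346},h_{12356}$ are all correct.

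The gap is in your last step: the coefficient-matching does \emph{not} reduce to the linear $2$-cocycle relation but to identities \emph{quadratic} in the $\varphi$'s. Take the coefficient of $h_{12345}$ and the boundary variable $x_{1256}$. On the left, $f_{123}\int\Phi_{12346}\Phi_{12356}\,\mathrm dx$ forces $\Phi_{12346}$ to supply its $\varphi_{123}\,x_{1234}x_{1236}$ term and $\Phi_{12356}$ its $\varphi_{125}\,x_{1235}x_{1256}$ term, so after cancelling $f_{123}=1/\varphi_{123}$ the coefficient is $\pm\varphi_{125}$. On the right, $x_{1256}$ occurs only in the two integrals weighted by $\varphi_{245}$ and $\varphi_{145}$, every surviving term there carries exactly one factor $\varphi_{456}$ (from whichever $\Phi$ supplies both of its inner variables), and after the prefactor $f_{456}/\varphi_{456}=1/\varphi_{456}^2$ you are left needing, up to the sign bookkeeping you already flag,
\begin{equation*}
\varphi_{125}\,\varphi_{456}-\varphi_{145}\,\varphi_{256}+\varphi_{156}\,\varphi_{245}=0 .
\end{equation*}
This is a three-term Grassmann--Pl\"ucker relation for $\varphi_{abc}=\det(v_a,v_b,v_c)$ with $v_i=(1,\xi_i,\eta_i)^{\mathrm T}$ (the relation $\varphi_{5ab}\varphi_{5cd}-\varphi_{5ac}\varphi_{5bd}+\varphi_{5ad}\varphi_{5bc}=0$ with $(a,b,c,d)=(1,2,4,6)$); the other nonvanishing coefficients require analogous relations, while the coefficients of $x_{1245},x_{1345},x_{2345}$ need only a sign cancellation. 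Crucially, these quadratic relations are \emph{not} consequences of the cocycle Statement you invoke: the cocycle $\varphi_{abc}=c_{bc}-c_{ac}+c_{ab}$ with $c_{12}=c_{45}=1$ and all other $c_{ij}=0$ gives $\varphi_{125}\varphi_{456}-\varphi_{145}\varphi_{256}+\varphi_{156}\varphi_{245}=1\neq0$. So you must separately state and prove the Pl\"ucker relations for \eqref{phi} --- easy, but a strictly stronger input than the cocycle property; this is also why the elliptic weights of the Appendix are only an experimental result rather than a corollary of the cocycle property they share. Finally, your $S_3$ reduction to a single one of the three identities is plausible but incomplete as stated: you must also track the action of the relabeling on the orientation signs $p_{ijklm}$ and $\epsilon^{ijklm}_{d_1abcd_2}$ and on the ordered Berezin measure $\mathrm dx_{1234}\,\mathrm dx_{1235}\,\mathrm dx_{1236}$, all of which change sign under odd permutations.
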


\begin{proof}
Direct calculation.
\end{proof}

There appears to be analogy between the constructions in this paper and those in~\cite{exotic-second}. Recall that second homologies, in their exotic form, do enter in Grassmanian 3--3 relations in the mentioned paper. Our present relations look substantially simpler, which may be important for constructing a TQFT. Their homological nature is still to be clarified.

The 3--3 relations proposed here have been found while trying to generalize the pentagon relation in~\cite{KS} to four-dimensional case.

\subsection*{Acknowledgements}

The calculations in this paper have been done using GAP~\cite{GAP} and Maxima~\cite{maxima}. I would like to thank the creators and maintainers of these systems.

\small

\normalsize
\appendix
\setcounter{secnumdepth}{0}

\section{Appendix: A relation with a quadratic form of rank 4}\label{a:rank4}

The explanation of relation~\eqref{1+Phi} for the Grassmanian quadratic form~$\Phi_{ijklm}$ introduced in Section~\ref{s:W} lies in the fact that $\Phi_{ijklm}$ has \emph{rank~2}. A generic Grassmanian quadratic form of five variables has, however, rank~4 --- the maximal rank of an antisymmetric $5\times 5$ matrix. It is natural to expect (for instance, from a comparison with the pentagon relation in~\cite{KS}) that relations with quadratic forms of rank~4 will expose richer mathematical structure than those of rank~2.

In this Appendix, I report new Grassmann weights~$\mathcal W_{ijklm}$, with $\Phi_{ijklm}$ of rank~4. They are constructed almost exactly as in Section~\ref{s:W}, except that we specify the field~$\mathbb F$ to be the field of complex numbers, $\mathbb F = \mathbb C$, and $\varphi_{abc}$ is defined not by~\eqref{phi}, but as follows:
\begin{equation}\label{ell}
\varphi_{abc} = \sn (\alpha_a-\alpha_b) \sn (\alpha_b-\alpha_c) \sn (\alpha_c-\alpha_a)  \,.
\end{equation}
Here $\sn(\,\cdot\,) = \sn(\,\cdot\,,k)$ is the Jacobi elliptic sine of some fixed modulus~$k$, and there is just one complex number~$\alpha_i$ at each vertex~$i$.

\begin{xpr}
The weights~$\mathcal W_{ijklm}$ with $\varphi_{abc}$ defined according to~\eqref{ell} satisfy the same relation~\eqref{33}, refined according to the same Theorem~\ref{th:f}.
\end{xpr}

\noindent
Also, it is not hard to show that the values~\eqref{ell} have the same cocyclic property as described in the Statement after Theorem~\ref{th:f}.

\smallskip

These new Grassmann weights were found by guess-and-try method combined with some theoretical ideas that are to be disclosed later. By now, I was only able to check numerically the validity of relation~\eqref{33} for these weights.

\end{document}